\newtheorem*{acknowledgement}{Acknowledgement}
\newtheorem{corollary}{Corollary}
\newtheorem{lemma}{Lemma}
\newtheorem{theorem}{Theorem}
\numberwithin{equation}{section}
\title[four-manifold with positive curvature]{Four-manifolds with positive curvature}
\author{R. Di\'ogenes}
\author{E. Ribeiro Jr.}
\author{E. Rufino}
\address[R. Di\'ogenes]{UNILAB, Instituto de Ci\^{e}ncias Exatas e da Natureza, Campus dos Palmares, ROD. CE 060, Km 51, 62.785-000 - Acarape / CE, Brazil}\email{rafaeldiogenes@unilab.edu.br}
\address[E. Ribeiro Jr.]{Universidade Federal do Cear\'a - UFC, Departamento  de Matem\'atica, Campus do Pici, Av. Humberto Monte, Bloco 914,
60455-760, Fortaleza / CE, Brazil.}\email{ernani@mat.ufc.br}
\address[E. Rufino]{Universidade Federal de Roraima - UFRR, Departamento de Matem\'atica, Campus Paricarana, Av. Cap. Ene Garcez, 2413, 69310-000, Boa Vista / RR, Brazil} \email{elzimar.rufino@ufrr.br}
\thanks{E. Ribeiro Jr was partially supported by grants from CNPq/Brazil (Grant: 303091/2015-0), PRONEX-FUNCAP/CNPq/Brazil and CAPES/ Brazil - Finance Code 001}
\thanks{E. Rufino was partially supported by CAPES/Brazil}
\keywords{Four-manifolds, curvature pinching, positive sectional curvature, biorthogonal curvature} \subjclass[2000]{Primary 53C25, 53C20, 53C21; Secondary 53C65}
\begin{document}

\begin{abstract}
In this note we prove  that a four-dimensional compact oriented half-confor\-mally flat Riemannian manifold $M^4$ is topologically $\mathbb{S}^{4}$ or $\mathbb{C}\mathbb{P}^{2},$ provided that the sectional curvatures all lie in the interval $[\frac{3\sqrt{3}-5}{4},\,1].$ In addition, we use the notion of biorthogonal (sectional) curvature to obtain a pinching condition which guarantees that a four-dimensional compact manifold is homeomorphic to a connected sum of copies of the complex projective plane or the $4$-sphere.
\end{abstract}

\maketitle

\section{Introduction}

A classical topic in Riemannian geometry is to study manifolds with positive sectional curvature. The sectional curvature is
the most natural generalization to higher dimensions of the Gaussian curvature of a surface, given that it controls the behavior of geodesics.  However, very few topological obstructions to positive sectional curvature are known, and many conjectures about this subject remain open, as for example the Hopf conjecture on $\Bbb{S}^{2}\times\Bbb{S}^{2},$ which is one of the oldest conjectures in global Riemannian geometry.

A compact (without boundary) Riemannian manifold $(M^{n},\,g)$ is said $\delta$-pinched if the sectional curvature $K$ satisfies 

\begin{equation}
1\geq K\geq\delta.
\end{equation} If the strict inequality holds, we say that $M^n$ is strictly $\delta$-pinched.

The notion of curvature pinching was introduced by Rauch \cite{Rauch} in 1951. In considering this notion Rauch was able to show that a compact simply connected Riemannian manifold which is strictly $(3/4)$-pinched  is a topological sphere. This curvature pinching was improved to $\delta=1/4$ in 1960 by Berger \cite{BergerASNSP} and Klingenberg \cite{Kl}. Such an improvement became known as the Topological Sphere Theorem. After almost 50 years Brendle and Schoen \cite{BS-JAMS} showed by an outstanding method that under the same curvature pinching of Berger and Klingenberg such a manifold must be diffeomorphic to the sphere, this result has been known as the Differentiable Sphere Theorem; see also \cite{BS}. Moreover, by combining the results of Berger \cite{Berger} and Petersen and Tao \cite{tao}, it is known that given a Riemannian manifold $M^{n}$ there is a real number $\varepsilon$ (unknown) such that if $M^n$  is $(\frac{1}{4}-\varepsilon)$-pinched, then $M^n$ is either homeomorphic to $\Bbb{S}^n$ or diffeomorphic to a spherical space form of rank $1.$ In \cite{Hulin}, Hulin used the classical Weitezenb\"ock formula to show that if a four-dimensional connected manifold $(M^4,\,g)$ is $(\frac{1}{4}-\gamma)$-pinched, for $\gamma<2,5\,.\,10^{-4},$ then the second Betti number of $M^4$ is less than or equal to $1.$ These results stimulated many interesting works. In the next subsection we quickly review some related results in order to draw the state of the art and put our results in perspective.

\subsection{Four-manifolds with positive sectional curvature}

In the last decades many mathematicians have been studied four-dimensional manifolds under suitable curvature pinching conditions. It is well known that four-manifolds display a peculiar features.  In large part this is attributed to the fact that on a four-dimensional oriented compact Riemannian manifold $(M^4,g),$ the bundle of 2-forms, denoted by $\Lambda^2M,$ can be invariantly decomposed as
$$\Lambda^2M=\Lambda^+M\oplus\Lambda^-M,$$ where $\Lambda^\pm M$ are the $\pm1$-eigenspaces of the Hodge star operator $\ast.$ Hence, the space of harmonic 2-forms $H^2(M^4;\Bbb{R})$ can be split as $H^2(M^4;\Bbb{R})=H^+(M^4;\Bbb{R})\oplus H^-(M^4;\Bbb{R}),$ where $H^\pm(M^4;\Bbb{R})$ stands for the space of positive and negative harmonic 2-forms, respectively. Furthermore,  the second Betti number $b_2$ of $M^4$ can be written as $b_2=b^{+}+b^{-},$ where $b^\pm={\rm dim}H^\pm(M^4;\Bbb{R}).$

For our purposes it is important to recall that a four-dimensional Riemannian manifold $M^4$ is said to be {\it positive definite} if and only if $b^{-}=0.$ In particular, when the signature of $M^4$ is non-zero we will always orient the manifold so as to make the signature positive.

It follows from Bourguignon  \cite{Bourguignon} and Ville \cite{Ville} that a $(\frac{4}{19}\approx0.2105)$-pinched four-dimen\-sio\-nal compact manifold is topologically the sphere $\Bbb{S}^4$ or the complex projective space $\Bbb{CP}^2.$ This pinching constant was improved in 1991 by Seaman \cite{SeamanPAMS} to $\approx0.1714.$   Recently, Di\'ogenes and Ribeiro \cite{DR} were able to show that a four-dimensional compact oriented connected Riemannian manifold which is $(\approx0.16139)$-pinched must be de\-fi\-nite. In particular, they showed that a four-dimensional compact oriented Einstein manifold $\frac{1}{10}$-pinched  is either topologically $\Bbb{S}^4$ or homothetically isometric to $\Bbb{CP}^2.$ Besides, the main result in \cite{Ribeiro} implies that a four-dimensional compact Einstein manifold $M^4$ with normalized Ricci curvature $Ric=1$ and sectional curvature $K\ge \frac{1}{12}$ must be isometric to either $\Bbb{S}^4$ or $\Bbb{CP}^2;$ see also \cite{XCao,Cui,Wu}. Indeed, it remains a challenging task to obtain new classification results under weaker curvature pinching conditions. 

Before stating our first result let us also recall that a metric on a four-dimensional manifold $M^4$ is {\it half conformallly flat} if it is sefdual or antiselfdual, namely, $W^{-}=0$ or $W^{+}=0,$ respectively, where $W$ stands for the Weyl tensor. Typical examples of half-conformally flat manifolds include the standard sphere, the complex projective plane or $K3$ surfaces with their Ricci-flat metrics. Other interesting
examples were built by LeBrun \cite{LeBrun}. For a nice overview on half-conformally flat manifolds see, for instance, [\cite{besse}, Chapter 13].

After these preliminary remarks we may announce our first result as follows.

\begin{theorem}\label{thhalf}
Let $(M^{4},g)$ be a four-dimensional compact oriented connected half-conformally flat Riemannian manifold whose sectional curvatures all lie in the interval $[\frac{3\sqrt{3}-5}{4},\,1].$ Then $M^4$ is topologically $\Bbb{S}^4$ or $\Bbb{CP}^2.$
\end{theorem}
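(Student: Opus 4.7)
The plan is to run a Bochner--Weitzenb\"ock argument that exploits the half-conformal flatness to kill one chirality of harmonic two-forms outright, and then uses the pointwise pinching on $K$ to control the other chirality strongly enough to force the topology. First, from $K \geq \delta > 0$ together with Synge's theorem in even dimensions, $M^4$ is simply connected and has positive scalar curvature satisfying $s \geq 12\delta$. Up to reversing orientation one may assume the manifold is self-dual, i.e.\ $W^- \equiv 0$.

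For any antiself-dual harmonic form $\omega \in H^-(M^4;\mathbb{R})$, the Bochner--Weitzenb\"ock formula in dimension four reduces, using $W^- = 0$, to $\nabla^*\nabla\omega + \tfrac{s}{3}\omega = 0$; integrating and using $s>0$ forces $\omega \equiv 0$, so $b^-=0$.

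The heart of the proof is to show $b^+ \leq 1$. For $\omega \in H^+$ the Weitzenb\"ock identity reads $\nabla^*\nabla\omega = 2W^+(\omega) - \tfrac{s}{3}\omega$, and evaluating $\tfrac{1}{2}\Delta|\omega|^2$ at an interior maximum of $|\omega|^2$ forces $w_3 \geq s/6$ there, where $w_3$ denotes the top eigenvalue of $W^+$. In the self-dual case the vanishing of $W^-$ together with the second Bianchi identity yield the identity $w_i = 2K^\perp_i - s/6$, linking eigenvalues of $W^+$ to biorthogonal sectional curvatures $K^\perp_i = \tfrac{1}{2}\bigl(K(P_i)+K(P_i^\perp)\bigr)$, with the trace condition $\sum_i K^\perp_i = s/4$. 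Combined with the pointwise constraints $\delta \leq K^\perp_i \leq 1$ inherited from the pinching, this turns the maximum principle inequality into a constrained optimization in $(K^\perp_1,K^\perp_2,K^\perp_3)$ whose critical threshold is $\delta_0 = \tfrac{3\sqrt{3}-5}{4}$; when $\delta \geq \delta_0$ one concludes that either $\omega \equiv 0$ (so $b^+=0$) or all inequalities saturate, pinning the geometry to the $\mathbb{CP}^2$ model, in which the K\"ahler form is a parallel $W^+$-eigenvector with eigenvalue $s/6$.

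With $b^- = 0$, $b^+ \leq 1$, and $M^4$ simply connected, Freedman's classification of simply connected topological four-manifolds, together with Donaldson's theorem ruling out nonstandard positive-definite intersection forms, yields that $M^4$ is homeomorphic to $\mathbb{S}^4$ when $b^+=0$ and to $\mathbb{CP}^2$ when $b^+=1$. The main obstacle is the sharp form of the pinching estimate in the third step: one has to identify the extremal configurations of the triple $(K^\perp_1,K^\perp_2,K^\perp_3)$ and verify that only the $\mathbb{CP}^2$-type configuration saturates all the inequalities simultaneously, which is precisely what pins down the constant $(3\sqrt{3}-5)/4$ rather than some weaker value.
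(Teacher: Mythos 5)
Your first and last steps are sound: with $W^-\equiv 0$ and $s>0$ the Weitzenb\"ock formula $0=\nabla^*\nabla\omega+\frac{s}{3}\omega$ on anti-self-dual harmonic forms does force $b^-=0$ (the paper reaches the same conclusion by citing Noronha's Proposition 2.4), Synge's theorem gives simple connectivity, and Freedman--Donaldson then reduce the whole problem to showing $b_2\le 1$. The gap is exactly in what you call the heart of the proof. Your mechanism for bounding $b^+$ is a maximum-principle argument applied to a \emph{single} self-dual harmonic form; at an interior maximum of $|\omega|^2$ it yields only the necessary condition $w_3^+\ge s/6$, which is already satisfied by the K\"ahler form on $\mathbb{CP}^2$ and therefore cannot separate $b^+=1$ from $b^+\ge 2$ --- nowhere does your argument use the existence of two linearly independent self-dual harmonic forms. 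The subsequent ``constrained optimization in $(K_1^\perp,K_2^\perp,K_3^\perp)$'' is asserted rather than carried out: with the constraints you actually have available ($w_i^+=2K_i^\perp-\frac{s}{6}$ when $W^-=0$, $\sum_i K_i^\perp=\frac{s}{4}$, $\delta\le K_i^\perp\le 1$, and $K_3^\perp\ge\frac{s}{6}$ at the maximum point) one only extracts $\delta\le\frac{s}{24}\le\frac12$, which is vacuous for $\delta\approx 0.049$, and no derivation of the threshold $\frac{3\sqrt3-5}{4}$ is given. (Also, the identity $w_i^+=2K_i^\perp-\frac{s}{6}$ is pure linear algebra of the curvature operator \`a la Singer--Thorpe; the second Bianchi identity plays no role there.)

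The paper's route to $b_2\le 1$ is integral rather than pointwise: it proves $\chi(M)-2\tau(M)>0$ by combining the Gauss--Bonnet--Chern formula with the Hirzebruch signature theorem, which reduces the problem to showing positivity of $\int_M\bigl(\frac{s^2}{24}-\frac13|W^+|^2+\frac73|W^-|^2-\frac12|\mathring{Ric}|^2\bigr)\,dV_g$. Ville's pointwise estimates (Lemmas \ref{LemmaVi} and 3 above) bound the integrand from below by a concave quadratic $f(v_1,v_2,v_3)$ on the simplex $\delta\le v_1\le v_2\le v_3\le 1$, where $v_i=\frac{s}{12}+\frac12 w_i^+$, and positivity is checked at the four corner configurations; the corner value $f(\delta,\delta,1)=\frac{8\delta^2+20\delta-1}{9}$ is what produces the constant $\frac{3\sqrt3-5}{4}$ as the positive root of $8\delta^2+20\delta-1=0$. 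If you wish to salvage a harmonic-forms proof of $b^+\le 1$ you would need a genuine two-form argument (pairing two independent self-dual harmonic forms, in the spirit of the proof of Theorem \ref{thkperp} or of Seaman's work), but as written your third step does not establish $b^+\le 1$, so the proposal has a genuine gap at its central point.
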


An important observation comes from the fact that $\Bbb{CP}^2\sharp \Bbb{CP}^2$ admits metrics with $K\geq 0$ and moreover it is definite, but has $b_2 > 1.$ Indeed, it is very interesting to determine if $\Bbb{CP}^2\sharp \Bbb{CP}^2$ admits a metric with positive sectional curvature. It should be also emphasized that the lower bound for the sectional curvature considered in Theorem \ref{thhalf} (namely, $\approx0.049$) improves significantly the constant considered in Theorem 1 of \cite{DR} (namely, $\approx0.16139$). Besides, the conclusion of Theorem \ref{thhalf} is clearly an improvement.

As an attempt to better understand four-dimensional manifolds with positive sectional curvature, it is natural to investigate other curvature positivity conditions. In this perspective, we recall that, for each plane $P\subset T_{p}M$ at a point $p\in M^4,$   {\it the biorthogonal (sectional) curvature} of $P$ is defined by the following average of the sectional curvatures
\begin{equation}
\displaystyle{K^\perp(P)=\frac{K(P)+K(P^\perp)}{2}},
\end{equation} where $P^\perp$  is the orthogonal plane to  $P.$ Indeed, the sum of two sectional curvatures on two orthogonal planes plays a very crucial role on four-dimensional manifolds. This notion appeared previously in works by Singer and Thorpe \cite{ST}, Gray \cite{Gray}, Seaman \cite{Seam2}, Noronha \cite{NoronhaMC}, Costa and Ribeiro Jr \cite{CR}, Bettiol \cite{renato} and many others. The positivity of the biorthogonal curvature is an intermediate condition between positive sectional curvature and positive scalar curvature $s.$ Moreover, as it was observed by Singer and Thorpe \cite{ST} a four-dimensional Riemannian manifold $(M^4,g)$ is Einstein if and only if $K^\perp(P)=K(P)$ for any plan $P\subset T_pM$ at any point $p\in M^4.$  From Seaman \cite{SeamanTAMS} and Costa and Ribeiro \cite{CR}, $\Bbb{S}^4$ and $\Bbb{CP}^2$ are the only compact simply-connected four-dimensional manifolds with positive biorthogonal curvature that can have (weakly) $1/4$-pinched biorthogonal curvature, or nonnegative isotropic curvature, or satisfy $K^{\perp} \ge \frac{s}{24} > 0.$ In addition, by using this approach, Costa and Ribeiro \cite{CR} showed that the Yau's Pinching Conjecture is true in dimension $4.$ In \cite{renato2}, Bettiol proved that the positivity of biorthogonal curvature is preserved under connected sums. In particular, he showed that $\Bbb{S}^4,$ $\sharp^{m}\Bbb{CP}^2\sharp^{n}\overline{\Bbb{CP}}^2$ and $\sharp^{n}\big(\Bbb{S}^2\times \Bbb{S}^2\big)$ admit metrics with positive biorthogonal curvature. For more details see \cite{renato,renato2,CR,NoronhaMC} and \cite{Seam2}.

Now we may state our next result.

\begin{theorem}\label{thkperp}
Let $(M^{4},g)$ be a four-dimensional compact oriented connected Riemannian manifold satisfying $$K^{\perp}\geq \frac{s^{2}}{24(3\lambda_{1}+s)},$$ where $\lambda_1$ is the first eigenvalue of Laplacian operator and $s$ stands for the scalar curvature of $M^4.$ Then $M^{4}$ must be definite.
\end{theorem}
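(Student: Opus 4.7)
The plan is to argue by contradiction: assume that $M^4$ is not definite, so that $b^- \geq 1$, and pick a non-trivial anti-self-dual harmonic $2$-form $\omega \in \Lambda^- M$. The key tool is the Bochner--Weitzenböck formula for harmonic $2$-forms in dimension four, which on $\Lambda^-$ reads
\[
0 \;=\; \nabla^{*}\nabla \omega \;-\; 2W^{-}(\omega) \;+\; \frac{s}{3}\,\omega.
\]
Pairing with $\omega$ and integrating over $M$ yields
\[
\int_{M} |\nabla \omega|^{2}\,dV \;+\; \frac{1}{3}\int_{M} s\,|\omega|^{2}\,dV \;=\; 2\int_{M} \langle W^{-}(\omega),\omega\rangle\,dV.
\]

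The next step is to convert the biorthogonal pinching hypothesis into a pointwise upper bound on $\langle W^{-}(\omega),\omega\rangle$. Using the well-known identification
\[
\min_{P\subset T_{p}M} K^{\perp}(P) \;=\; \frac{s}{12} \;+\; \frac{w_{1}^{+} + w_{1}^{-}}{2},
\]
where $w_{1}^{\pm}\le w_{2}^{\pm}\le w_{3}^{\pm}$ are the eigenvalues of $W^{\pm}$, the hypothesis gives $w_{1}^{+}+w_{1}^{-}\ge 2\kappa - s/6$ with $\kappa = \frac{s^{2}}{24(3\lambda_{1}+s)}$. Since $w_{1}^{+}\le 0$ and the trace-free constraint on $W^{-}$ forces $w_{3}^{-}\le -2w_{1}^{-}$, one deduces the pointwise estimate
\[
\langle W^{-}(\omega),\omega\rangle \;\le\; w_{3}^{-}|\omega|^{2} \;\le\; \left(\tfrac{s}{6}-4\kappa\right)|\omega|^{2}.
\]
Plugging this into the integrated Weitzenböck identity and using the algebraic simplification $\frac{s}{3}-8\kappa = \frac{s\lambda_{1}}{3\lambda_{1}+s}$ produces
\[
\int_{M}|\nabla\omega|^{2}\,dV \;\le\; \frac{s\lambda_{1}}{3\lambda_{1}+s}\int_{M}|\omega|^{2}\,dV.
\]

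To close the argument I would call on the refined Kato inequality for anti-self-dual harmonic $2$-forms, $|\nabla \omega|^{2}\ge \frac{3}{2}|\nabla |\omega||^{2}$, and then apply the Poincar\'e--Rayleigh inequality $\int_{M}(|\omega|-\overline{|\omega|})^{2}\,dV \le \lambda_{1}^{-1}\int_{M}|\nabla|\omega||^{2}\,dV$ associated to the first eigenvalue of the Laplacian on functions. Combining these three ingredients with the pointwise bound on $\langle W^{-}(\omega),\omega\rangle$ is designed to force $|\omega|$ to be constant and then, together with the equality case of the refined Kato inequality, to imply $\omega\equiv 0$; this contradicts $b^{-}\ge 1$ and shows that $M^{4}$ is definite.

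The main obstacle will be the closing estimate: the constant $\frac{s^{2}}{24(3\lambda_{1}+s)}$ is calibrated so that the Weitzenböck bound matches the Poincar\'e-refined-Kato bound \emph{exactly} when $|\omega|$ is constant, so the delicate point is to squeeze the inequalities and handle the equality (parallel/constant-norm) case, presumably by invoking the refined Kato rigidity or by observing that a parallel non-zero anti-self-dual $2$-form gives rise to extra structure incompatible with the strict curvature pinching. A second subtlety is that $s$ is a non-constant function, so one must be attentive to whether the hypothesis is used pointwise or after integration against $|\omega|^{2}$.
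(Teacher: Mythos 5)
Your overall strategy (contradiction via a nonzero anti-self-dual harmonic form, Weitzenb\"ock formula, converting the biorthogonal hypothesis into an eigenvalue bound on $W^-$) starts along a plausible line, and your pointwise estimate is essentially sound apart from an arithmetic slip: from $w_1^-\geq 2\kappa-\tfrac{s}{6}$ and $w_3^-\leq -2w_1^-$ you get $w_3^-\leq \tfrac{s}{3}-4\kappa$, not $\tfrac{s}{6}-4\kappa$ (your final display is consistent with the corrected value, so this is only a typo). The genuine gap is in the closing step, and it is fatal as described. Your chain of inequalities is
$\tfrac{3}{2}\lambda_1\int_M(|\omega|-\overline{|\omega|})^2\,dV\leq \tfrac{3}{2}\int_M|\nabla|\omega||^2\,dV\leq\int_M|\nabla\omega|^2\,dV\leq\int_M\tfrac{s\lambda_1}{3\lambda_1+s}|\omega|^2\,dV$,
and the two ends are perfectly compatible: when $|\omega|$ is constant the left-hand side is zero while the right-hand side is strictly positive, so no contradiction arises and nothing forces $\omega\equiv 0$. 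Your claim that the constant $\tfrac{s^2}{24(3\lambda_1+s)}$ is ``calibrated so that the two bounds match exactly when $|\omega|$ is constant'' is not correct; moreover, even if you could force $|\omega|$ constant and $\omega$ parallel, a parallel anti-self-dual $2$-form only gives a K\"ahler structure for the reversed orientation, which is not by itself incompatible with the (non-strict) pinching hypothesis. Note also that your pointwise bound $w_3^-\leq\tfrac{s}{3}-4\kappa$ is strictly weaker than the classical Bochner threshold $w_3^-<\tfrac{s}{6}$ for all $\lambda_1>0$, so a decoupled single-form argument cannot succeed unless the $\lambda_1$-term genuinely compensates --- which in your setup it does not.

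The reason the paper's proof works where yours stalls is that the first eigenvalue enters through the Poincar\'e inequality applied to the \emph{zero-mean} function $\bigl(|\omega_+|^2+\varepsilon\bigr)^{1/4}-t\bigl(|\omega_-|^2+\varepsilon\bigr)^{1/4}$, with $t=t(\varepsilon)$ chosen precisely to kill the mean; this couples the self-dual and anti-self-dual forms and produces the term $\lambda_1\int_M\bigl(|\omega_+|^{1/2}-t|\omega_-|^{1/2}\bigr)^2$, which cannot degenerate the way your variance term does. The resulting integrand is a quadratic polynomial $\mathcal{P}(t)$ whose discriminant is $\tfrac{4}{9}|\omega_+||\omega_-|\bigl(s^2-24K_1^\perp s-72\lambda_1K_1^\perp\bigr)$; the hypothesis $K^\perp\geq\tfrac{s^2}{24(3\lambda_1+s)}$ is calibrated exactly to make this discriminant nonpositive, whence $\mathcal{P}(t)\geq 0$, then $\mathcal{P}\equiv 0$, then $|\omega_-|=0$. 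Your proposal discards $\omega_+$ at the outset (using only $w_1^+\leq 0$), and with it the mechanism that makes $\lambda_1$ do any work; to repair the argument you would need to reinstate the coupled normalization or find a different way to apply the Poincar\'e inequality to a function that is forced to have zero mean.
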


Since $\Bbb{S}^{2}\times \Bbb{S}^{2}$ is not definite,  Theorem \ref{thkperp} implies, in particular, that $\Bbb{S}^{2}\times \Bbb{S}^{2}$ does not admit a metric satisfying $$K^{\perp}\geq \frac{s^{2}}{24(3\lambda_{1}+s)}.$$  We also point out that this result was also independently observed by Cao and Tran (see Remark 1.1 and Theorem 1.1 (3) in \cite{CaoTran}). The methods designed for the proof of Theorem \ref{thkperp} was essentially inspired by \cite{gursky} (see also \cite{DR}).

In the sequel, as an application of Theorem \ref{thkperp} combined with results by Freedman \cite{Freedman} and Donaldson \cite{Donaldson} we get the following corollary.

\begin{corollary}
\label{cor1}
Let $(M^{4},g)$ be a four-dimensional simply connected compact oriented Riemannian manifold satisfying $$K^{\perp}\geq \frac{s^{2}}{24(3\lambda_{1}+s)}.$$ Then $M^{4}$ is homeomorphic to a connected sum $\Bbb{CP}^{2}\sharp\cdots\sharp\Bbb{CP}^{2}$ of $b_{2}$ copies of the complex projective plane (if $b_{2}>0$) or the $4$-sphere (if $b_{2}=0$).
\end{corollary}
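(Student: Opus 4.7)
The plan is to combine \thmref{thkperp} with the celebrated topological classification theorems of Freedman and Donaldson for smooth simply connected closed 4-manifolds; the geometric content of the corollary is entirely absorbed by \thmref{thkperp}, so what remains is standard book-keeping about the intersection form.

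First, I would apply \thmref{thkperp} directly to $(M^4,g)$ to conclude that $M^4$ is definite, i.e., $b^-=0$ with respect to the chosen orientation. Since $M^4$ is simply connected, Poincar\'e duality implies that $H^2(M^4;\Bbb{Z})$ is free abelian of rank $b_2=b^+$ and that the intersection form $Q_M$ is unimodular. The vanishing of $b^-$ then forces $Q_M$ to be positive definite over $\Bbb{Z}$ (with $Q_M=0$ in the degenerate case $b_2=0$), because after tensoring with $\Bbb{R}$ the form diagonalizes as the sum of $b^+$ copies of $(+1)$ and $b^-$ copies of $(-1)$.

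Next, I would invoke Donaldson's theorem \cite{Donaldson}: any positive definite unimodular intersection form of a smooth closed simply connected 4-manifold is diagonalizable over $\Bbb{Z}$, hence isomorphic to $\mathrm{diag}(1,\ldots,1)$ of rank $b_2$. This is exactly the intersection form of $\sharp^{b_2}\Bbb{CP}^2$. Finally, by Freedman's classification theorem \cite{Freedman}, two closed simply connected topological 4-manifolds with isomorphic intersection forms and equal Kirby--Siebenmann invariants are homeomorphic; since both $M^4$ and $\sharp^{b_2}\Bbb{CP}^2$ are smooth, their Kirby--Siebenmann invariants vanish, and one concludes that $M^4$ is homeomorphic to $\sharp^{b_2}\Bbb{CP}^2$ when $b_2>0$ and to $\Bbb{S}^4$ when $b_2=0$.

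The main obstacle is really encapsulated in \thmref{thkperp}, whose proof uses the biorthogonal curvature lower bound to rule out any harmonic anti-self-dual 2-form. Once $b^-=0$ has been secured, the rest of the argument is a direct and well-known application of the Donaldson--Freedman machinery and presents no additional analytic or geometric difficulty; the only thing worth double-checking is the innocuous degenerate case $b_2=0$, where $H^2(M^4;\Bbb{Z})=0$ and Freedman's theorem alone identifies $M^4$ with $\Bbb{S}^4$.
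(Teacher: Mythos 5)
Your proposal is correct and follows exactly the route the paper takes: the corollary is obtained by applying Theorem~\ref{thkperp} to conclude that $M^4$ is definite and then invoking Donaldson's diagonalization theorem together with Freedman's classification of simply connected topological $4$-manifolds. Your write-up merely spells out the standard intersection-form bookkeeping that the paper leaves implicit.
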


It may be interesting to compare Corollary \ref{cor1} with Theorem 1.3 in \cite{Ribeiro}. In fact, our latter result requires the same pinching condition of Theorem 1.3 in \cite{Ribeiro}, however, it does not require conditions on the Weyl tensor and analyticity of the metric.

\section{Background}

Throughout this section we review some information and present lemmas that will be useful in the proof of the main results. We start recalling that on a four-dimensional oriented Riemannian manifold $M^4$ the bundle of $2$-forms can be invariantly decomposed as a direct sum $$\Lambda^2=\Lambda^{+}\oplus\Lambda^{-}.$$ In particular, the Weyl curvature tensor $W$ is an endomorphism of the bundle of 2-forms $\Lambda^2M=\Lambda^+M\oplus\Lambda^-M$ such that $$W=W^+\oplus W^-,$$ where $W^\pm:\Lambda^\pm M\longrightarrow\Lambda^\pm M$ are called of the {\it selfdual} and {\it antiselfdual} parts of $W.$ Thus, we may fix a point $p\in M^4$ and diagonalize $W^\pm$ such that $w_i^\pm,$ $1\le i \le 3,$ are their respective eigenvalues. In particular, they satisfy
\begin{equation}
\label{eigenvalues} w_1^{\pm}\leq w_2^{\pm}\leq w_3^{\pm}\,\,\,\,\hbox{and}\,\,\,\,w_1^{\pm}+w_2^{\pm}+w_3^{\pm}
= 0.
\end{equation} Next, as it was pointed out in \cite{CR} as well as \cite{Ribeiro}, the definition of biorthogonal curvature provides the following identities
\begin{equation}
\label{K1perp} K_1^\perp = \frac{w_1^+ + w_1^-}{2}+\frac{s}{12}
\end{equation} and
\begin{equation}\label{K3perp}
K_3^\perp = \frac{w_3^+ + w_3^-}{2}+\frac{s}{12},
\end{equation} where $K_1^\perp(p) = \textmd{min} \{K^\perp(P); P\subset T_{p}M \}$ and $K_3^\perp(p) = \textmd{max}\{K^\perp (P); P\subset T_{p}M \}.$ Furthermore, if $\mathcal{R}$ denotes the curvature of $M^4$ we get the following decomposition

\begin{equation}
\mathcal{R}=
\left(
  \begin{array}{c|c}
    \\
W^{+} +\frac{s}{12}Id & \mathring{Ric} \\ [0.4cm]\hline\\

    \mathring{Ric}^{\star} & W^{-}+\frac{s}{12}Id  \\[0.4cm]
  \end{array}
\right)=U+W^{+}+W^{-}+Z,
\end{equation} where $U=\frac{s}{12}Id_{\Lambda^{2}},$ $Z=\left(
  \begin{array}{cc}
0 & \mathring{Ric} \\

    \mathring{Ric}^{\star} & 0  \\
  \end{array}
\right)$ and $\mathring{Ric}:\Lambda^{-}\to \Lambda^{+}$ stands for the traceless part of the Ricci curvature of $M^4.$

Proceeding, we also remember that if $\omega$ is a $2$-form we have the following Weitzenb\"{o}ck formula
\begin{equation*}
\frac{1}{2}\Delta|\omega|^2=\langle\Delta\omega,\omega\rangle+|\nabla\omega|^2+\langle
\mathcal{N}(\omega),\omega\rangle,
\end{equation*}
where $\mathcal{N}$ is the Weitzenb\"{o}ck operator given by
\begin{eqnarray}
\label{WeitzenbockDef}
\langle \mathcal{N}(v_1\wedge v_2),w_1\wedge w_2\rangle&=&Ric(v_1,w_1)\langle v_2,w_2\rangle+Ric(v_2,w_2)\langle v_1,w_1\rangle\nonumber\\&&-Ric(v_1,w_2)\langle v_2,w_1\rangle -Ric(v_2,w_1)\langle v_1,w_2\rangle\nonumber\\&&+2\langle R(v_1,v_2)w_1,w_2\rangle.
\end{eqnarray} Here, $v_i$ and $w_i$ are tangent vectors; for more details see \cite{Ko} and \cite{SeamanPAMS}. We have adopted the opposite of the usual sign convention for the Laplacian, i.e., $\Delta f= {\rm div} (\nabla f).$

From now on we assume that $M^4$ is a four-dimensional $\delta$-pinched manifold, that is, the sectional curvature $K$ of $M^4$ satisfies 

\begin{equation}
1\geq K\geq\delta.
\end{equation} With this condition, as a slight modification of the proof of an useful inequality by Berger \cite{Berger} (see also \cite{DR}, Lemma 1) we obtain the following lemma.

\begin{lemma}
\label{lem1}
Let $(M^{4},\,g)$ be a $4$-dimensional oriented Riemannian manifold. Then we have:
\begin{equation*}
\langle \mathcal{N}(\omega),\omega\rangle \geq
4K_{1}^{\perp}|\omega|^{2}-\frac{1}{3}(s-12K_{1}^{\perp})\vert|\omega_{+}|^{2}-|\omega_{-}|^{2}\vert,
\end{equation*} where $\omega=\omega_++\omega_-$ and $\omega_\pm\in\Lambda^\pm M.$
\end{lemma}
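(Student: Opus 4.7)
The plan is to work pointwise in a basis adapted to $\omega$. At any fixed point $p\in M^4$, by the canonical form for $2$-forms in four dimensions we may choose an orthonormal basis $\{e_1,e_2,e_3,e_4\}$ of $T_pM$ such that $\omega = a\,e_1\wedge e_2 + b\,e_3\wedge e_4$ for real numbers $a,b$. Setting $\omega_1^\pm := \tfrac{1}{\sqrt{2}}(e_1\wedge e_2\pm e_3\wedge e_4)$ for the standard unit basis of $\Lambda^\pm$ adapted to this frame, one immediately checks $|\omega|^2=a^2+b^2$ and $|\omega_+|^2-|\omega_-|^2=2ab$.

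Next, I expand $\langle\mathcal{N}(\omega),\omega\rangle$ by bilinearity. Using (\ref{WeitzenbockDef}) and the identity $Ric_{ii}=\sum_{j\ne i}K_{ij}$, a direct computation gives $\langle\mathcal{N}(e_i\wedge e_j),e_i\wedge e_j\rangle = Ric_{ii}+Ric_{jj}-2K_{ij}$, which in dimension four reduces to $K_{13}+K_{14}+K_{23}+K_{24}$ for both $\{i,j\}=\{1,2\}$ and $\{i,j\}=\{3,4\}$; the off-diagonal term equals $\langle\mathcal{N}(e_1\wedge e_2),e_3\wedge e_4\rangle = 2R_{1234}$. Regrouping
$$K_{13}+K_{14}+K_{23}+K_{24} = 2K^\perp(\mathrm{span}\{e_1,e_3\})+2K^\perp(\mathrm{span}\{e_1,e_4\}) \ge 4K_1^\perp,$$
the expansion becomes
$$\langle\mathcal{N}(\omega),\omega\rangle \ge 4K_1^\perp|\omega|^2 + 4ab\,R_{1234}.$$

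The main technical step is the pointwise estimate $|R_{1234}|\le (s-12K_1^\perp)/6$. Substituting $e_1\wedge e_2 = \tfrac{1}{\sqrt{2}}(\omega_1^++\omega_1^-)$ and $e_3\wedge e_4 = \tfrac{1}{\sqrt{2}}(\omega_1^+-\omega_1^-)$ into $R_{1234}=\langle\mathcal{R}(e_1\wedge e_2),e_3\wedge e_4\rangle$ and using the block decomposition of $\mathcal{R}$ displayed in the excerpt, the $\tfrac{s}{12}\mathrm{Id}$ piece drops out because $|\omega_1^+|^2=|\omega_1^-|^2$, while the $\mathring{Ric}$ off-diagonal blocks cancel by self-adjointness of $\mathcal{R}$; this leaves
$$R_{1234} = \tfrac{1}{2}\bigl[\langle W^+\omega_1^+,\omega_1^+\rangle-\langle W^-\omega_1^-,\omega_1^-\rangle\bigr].$$
Because $|\omega_1^\pm|=1$, the Rayleigh bounds give $\langle W^\pm\omega_1^\pm,\omega_1^\pm\rangle\in[w_1^\pm,w_3^\pm]$, so $R_{1234}\in[\tfrac{1}{2}(w_1^+-w_3^-),\tfrac{1}{2}(w_3^+-w_1^-)]$. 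A short calculation using $w_1^\pm+w_2^\pm+w_3^\pm=0$ and $w_1^\pm\le w_2^\pm\le w_3^\pm$ shows both endpoints are bounded in absolute value by $-(w_1^++w_1^-)$, which by identity (\ref{K1perp}) equals exactly $(s-12K_1^\perp)/6$.

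Combining the two estimates with $\bigl||\omega_+|^2-|\omega_-|^2\bigr|=2|ab|$ yields
$$\langle\mathcal{N}(\omega),\omega\rangle \ge 4K_1^\perp|\omega|^2 - \tfrac{2}{3}(s-12K_1^\perp)|ab| = 4K_1^\perp|\omega|^2 - \tfrac{1}{3}(s-12K_1^\perp)\bigl||\omega_+|^2-|\omega_-|^2\bigr|,$$
as desired. The non-routine part of the argument is the bound on $|R_{1234}|$: the classical Berger estimate has to be refined so as to exploit the biorthogonal information encoded in $K_1^\perp$ rather than a crude lower pinching $\delta$, and this is precisely what the reduction to the Weyl eigenvalues $w_1^\pm$ via (\ref{K1perp}) achieves.
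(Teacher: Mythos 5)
Your proof is correct, and its skeleton coincides with the paper's: the same adapted orthonormal frame with $\omega=a\,e_1\wedge e_2+b\,e_3\wedge e_4$, the same Weitzenb\"ock expansion giving the diagonal term $K_{13}+K_{14}+K_{23}+K_{24}$ and the cross term proportional to $ab\,R_{1234}$, and the same regrouping into two biorthogonal curvatures to get the $4K_1^\perp|\omega|^2$ piece. The one place where you genuinely diverge is the estimate $|R_{1234}|\le\frac{1}{6}(s-12K_1^\perp)$. The paper obtains it in two quoted steps: Seaman's inequality $|R_{ijkl}|\le\frac{2}{3}(K_3^\perp-K_1^\perp)$, followed by the bound $K_3^\perp\le\frac{s}{4}-2K_1^\perp$ coming from $w_3^\pm\le-2w_1^\pm$ and the identities (\ref{K1perp})--(\ref{K3perp}). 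You instead prove the combined bound in one stroke, writing $R_{1234}=\frac{1}{2}\bigl[\langle W^+\omega_1^+,\omega_1^+\rangle-\langle W^-\omega_1^-,\omega_1^-\rangle\bigr]$ from the block decomposition of $\mathcal{R}$ (the $\frac{s}{12}\mathrm{Id}$ and $\mathring{Ric}$ blocks do drop out as you claim) and then using the Rayleigh bounds together with $w_1^\pm\le 0$ and $w_3^\pm\le-2w_1^\pm$; I checked that both endpoints of your interval are indeed dominated by $-(w_1^++w_1^-)=\frac{1}{6}(s-12K_1^\perp)$. The payoff of your route is that the lemma becomes self-contained --- you effectively reprove the relevant case of Seaman's estimate rather than citing it --- at the cost of a slightly longer eigenvalue computation; the numerical bound, and hence the final inequality, is identical.
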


\begin{proof}
First of all, given a point $p\in M$ there exists an oriented orthonormal basis for $T_p M$
$\{e_1,e_2,e_3,e_4\}$ satisfying $\ast(e_1\wedge e_2)=e_3\wedge e_4$ and such that
\begin{equation*}
\omega=\frac{\sqrt{2}}{2}(|\omega_+|+|\omega_-|)e_1\wedge e_2+\frac{\sqrt{2}}{2}(|\omega_+|-|\omega_-|)e_3\wedge
e_4
\end{equation*} at point $p.$ In view of the identity (\ref{WeitzenbockDef}) we get
\begin{eqnarray}\label{lem1eq1}
\langle \mathcal{N}(\omega),\omega\rangle&=&|\omega|^2(K_{13}+K_{14}+K_{23}+K_{24})-2R_{1234}(|\omega_+|^2-|\omega_-|^2)\nonumber\\
 &=&2|\omega|^2(K^\perp_{13}+K^\perp_{14})-2R_{1234}(|\omega_+|^2-|\omega_-|^2)\nonumber\\
 &\geq&4K^\perp_1|\omega|^2-2R_{1234}(|\omega_+|^2-|\omega_-|^2),
\end{eqnarray} where $K^\perp_{ij}$ stands for the biorthogonal curvature of plane $e_i\wedge e_j.$ Here, we used that $\langle \mathcal{N}(\omega_{+}),\omega_{-}\rangle=0.$ Next, we apply the Seaman's estimate $$|R_{ijkl}|\leq\frac{2}{3}(K^\perp_3-K^\perp_1)$$ in order to obtain

\begin{equation}\label{lem1eq2}
\langle
\mathcal{N}(\omega),\omega\rangle\geq4K^\perp_1|\omega|^2-\frac{4}{3}(K^\perp_3-K^\perp_1)(|\omega_+|^2-|\omega_-|^2).
\end{equation}

On the other hand, it follows from (\ref{eigenvalues}) that $$w_3^\pm\leq-2w^\pm_1.$$ This data combined
with (\ref{K1perp}) and (\ref{K3perp}) yields
\begin{eqnarray*}
K_3^\perp&=&\frac{w_3^++w_3^-}{2}+\frac{s}{12}\\
 &\leq&-(w_1^++w_1^-)+\frac{s}{12}\\
 &=&-2K_1^\perp+\frac{s}{6}+\frac{s}{12},
\end{eqnarray*} so that
\begin{equation}\label{lem1eq3}
K_3^\perp\leq\frac{s}{4}-2K_1^\perp.
\end{equation} Putting together (\ref{lem1eq3}) and (\ref{lem1eq2}) we infer
\begin{equation*}
\langle \mathcal{N}(\omega),\omega\rangle\geq4K^\perp_1|\omega|^2-\frac{1}{3}(s-12K^\perp_1)||\omega_+|^2-|\omega_-|^2|.
\end{equation*} This finishes the proof of the lemma.
\end{proof}

In order to introduce the next lemma, we need to fix notation. We consider the set given by $$G=\{x\wedge y \in \Lambda^{2};\,\,\,x\,\,\hbox{and}\,\,y\,\,\hbox{are unitary and orthogonal}\}.$$ Be\-si\-des, let $\mathcal{G}=G/\pm 1$ be the 2-dimensional Grassmannian manifold of $T_{p}M.$ From this, it follows that if $H\in \Lambda^{+}$ and $K\in \Lambda^{-},$ we have $\frac{H+K}{\sqrt{2}}\in \mathcal{G}$ if and only if $||
H||=||K||=1$ (cf. Lemma 1 in \cite{Ville2}). Now, we may state a result obtained by Ville (cf. Lemma 2 in \cite{Ville2}, see also \cite{Ville}), which plays an important role in this paper. 

\begin{lemma}[\cite{Ville2}]
\label{LemmaVi}
Let $M^4$ be a $4$-dimensional oriented $\delta$-pinched Riemannian manifold. Then:

\begin{enumerate}
\item For all $P\in \mathcal{G},$ we have $\delta \leq \left\langle \big(U+W\big)(P),P\right\rangle \leq 1.$
\item For all $H\in \Lambda^{+},$ we have $\delta\leq  u+\frac{1}{2}\left\langle W^{+}H,H\right\rangle\leq 1,$ where $u=\frac{s}{12}.$
\end{enumerate}

\end{lemma}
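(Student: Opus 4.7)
The plan is to reduce both parts of the lemma to a single identity: for every $P\in\mathcal{G}$, one has
$$\langle (U+W)(P),P\rangle = K^{\perp}(P).$$
Given this, (1) is immediate from the pinching $\delta\leq K\leq 1$ applied to both $P$ and $P^{\perp}$, and (2) follows by producing a suitable partner $K\in\Lambda^{-}$ for the given unit $H\in\Lambda^{+}$ and applying (1) to $(H+K)/\sqrt{2}$.

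To establish the identity, I would fix $P=x\wedge y\in\mathcal{G}$ and decompose $P=\omega_{+}+\omega_{-}$ with $\omega_{\pm}\in\Lambda^{\pm}$; as recalled just before the lemma, $\|\omega_{+}\|=\|\omega_{-}\|=1/\sqrt{2}$, and the Hodge dual $\ast P=\omega_{+}-\omega_{-}$ is the unit decomposable $2$-vector corresponding to the orthogonal plane $P^{\perp}$. Using the block decomposition $\mathcal{R}=U+W^{+}+W^{-}+Z$, where $U$ acts as the scalar $s/12$, $W^{\pm}$ is supported on $\Lambda^{\pm}$, and $Z$ interchanges $\Lambda^{\pm}$ via $\mathring{Ric}$ and its adjoint, a direct expansion yields
\begin{align*}
K(P) &= \tfrac{s}{12}+\langle W^{+}\omega_{+},\omega_{+}\rangle+\langle W^{-}\omega_{-},\omega_{-}\rangle+2\langle \mathring{Ric}(\omega_{-}),\omega_{+}\rangle,\\
K(P^{\perp}) &= \tfrac{s}{12}+\langle W^{+}\omega_{+},\omega_{+}\rangle+\langle W^{-}\omega_{-},\omega_{-}\rangle-2\langle \mathring{Ric}(\omega_{-}),\omega_{+}\rangle.
\end{align*}
Averaging cancels the $Z$-contribution and produces exactly $\langle(U+W)(P),P\rangle$; the pinching hypothesis applied to each summand then gives $\delta\leq K^{\perp}(P)\leq 1$, proving (1).

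For (2), given a unit $H\in\Lambda^{+}$, I select a unit $K\in\Lambda^{-}$ with $\langle W^{-}K,K\rangle=0$. Such a $K$ exists because $W^{-}$ is a traceless symmetric operator on the $3$-dimensional space $\Lambda^{-}$, so its extreme eigenvalues obey $w_{1}^{-}\leq 0\leq w_{3}^{-}$, and the continuous functional $K\mapsto\langle W^{-}K,K\rangle$ on the unit sphere therefore attains the value $0$. By the characterization of $\mathcal{G}$, $P:=(H+K)/\sqrt{2}\in\mathcal{G}$, with $\Lambda^{\pm}$ components $H/\sqrt{2}$ and $K/\sqrt{2}$; applying part (1) gives
$$\delta \leq u+\tfrac{1}{2}\langle W^{+}H,H\rangle+\tfrac{1}{2}\langle W^{-}K,K\rangle = u+\tfrac{1}{2}\langle W^{+}H,H\rangle \leq 1,$$
which is exactly the asserted inequality. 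The only genuine content is the averaging identity identifying $\langle(U+W)(P),P\rangle$ with $K^{\perp}(P)$; the rest is bookkeeping, and I do not anticipate any real obstacle beyond keeping the $\Lambda^{+}\oplus\Lambda^{-}$ decomposition straight.
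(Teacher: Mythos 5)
Your proof is correct. Note that the paper does not prove this lemma at all --- it is quoted from Ville \cite{Ville2} without proof --- so there is nothing internal to compare against; but your argument is the standard one and is fully consistent with the conventions the paper sets up (the normalization $\|x\wedge y\|=1$ for orthonormal $x,y$, the characterization of $\mathcal{G}$ as $\frac{H+K}{\sqrt{2}}$ with $\|H\|=\|K\|=1$, and the block decomposition $\mathcal{R}=U+W^{+}+W^{-}+Z$). The key identity $\langle (U+W)P,P\rangle=\frac{1}{2}\bigl(K(P)+K(P^{\perp})\bigr)$ is exactly the averaging over $P$ and $\ast P$ that kills the off-diagonal $Z$-block, and your existence argument for a unit $K\in\Lambda^{-}$ with $\langle W^{-}K,K\rangle=0$ (tracelessness of $W^{-}$ plus the intermediate value theorem on the connected unit sphere of $\Lambda^{-}$) is sound; one could equally well average $u+\frac{1}{2}\langle W^{+}H,H\rangle+\frac{1}{2}\lambda_{i}^{-}$ over an eigenbasis $\{K_{i}\}$ of $W^{-}$ and use $\sum_{i}\lambda_{i}^{-}=0$. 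The only cosmetic caveat is that statement (2) should be read with $\|H\|=1$, as you implicitly do and as the paper's subsequent use with an orthonormal eigenbasis $\{H_{1},H_{2},H_{3}\}$ confirms.
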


For what follows, since $W^{+}$ is a symmetric endomorphism of $\Lambda^{+},$ we may consider an orthonormal basis of $\Lambda^{+}$ given by $\{H_{1},\,H_{2},\,H_{3}\}$ such that $$W^{+}H_{i}=w_{i}^{+}H_{i},\,\,\hbox{for}\,\,i=1,\,2\,\,\hbox{or}\,\, 3.$$ Moreover, we consider $K_{i}=\dfrac{ \mathring{Ric}^{\star} (H_{i})}{||\mathring{Ric}^{\star} (H_{i})||}\in \Lambda^{-}$ and let $z_{i}=\langle\mathring{Ric}^\star(H_i),K_i\rangle.$ Thus, we know that it holds

$$||\mathring{Ric}^\star(H_i)||^ {2}=\langle z_{i}K_{i},\, z_{i}K_{i}\rangle=z_{i}^ {2}.$$ Hereafter, we set $\lambda_{i}^{-}=\langle W^{-}K_{i},\,K_{i}\rangle$ and $v_{i}=u+\frac{1}{2}w_{i}^ {+}.$ In particular, it follows from Lemma \ref{LemmaVi} that $\delta \leq v_{i}\leq 1.$ With these settings, we have the following lemma due to Ville (cf. Lemma 3 in \cite{Ville}, see also \cite{Ville2}).

\begin{lemma}[\cite{Ville}]
Let $M^4$ be a $4$-dimensional oriented $\delta$-pinched Riemannian manifold. Then:
\begin{equation}
||Z||^{2}\leq 2\displaystyle \sum_{i=1}^{3}A_{i}^{2},
\end{equation} where $||Z||^{2}=||\mathring{Ric}^{\star}||^{2}+||\mathring{Ric}||^{2}$ and $A_{i}=\min \{(1-v_{i}+\frac{1}{2}\lambda_{i}^{-}),\,(v_{i}+\frac{1}{2}\lambda_{i}^{-}-\delta)\}.$
\end{lemma}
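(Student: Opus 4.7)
The strategy is to realize each coefficient $z_i$ as the $Z$-contribution to the sectional curvature of a suitable decomposable 2-form $P_i^\pm \in \mathcal{G}$, and then to extract term-by-term bounds on $|z_i|$ from the pinching inequalities $\delta \leq K(P_i^\pm) \leq 1$ furnished by Lemma \ref{LemmaVi}(1). Since $\|H_i\|=\|K_i\|=1$, the characterization of $\mathcal{G}$ recalled before Lemma \ref{LemmaVi} guarantees that
\begin{equation*}
P_i^\pm := \frac{H_i \pm K_i}{\sqrt{2}} \in \mathcal{G},\quad i=1,2,3.
\end{equation*}

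First, I would rewrite the right-hand side of the desired inequality in terms of the $z_i$. Because $\mathring{Ric}$ and $\mathring{Ric}^\star$ are mutually adjoint maps between $\Lambda^-$ and $\Lambda^+$, their Hilbert--Schmidt norms coincide. Expanding in the eigenbasis $\{H_1,H_2,H_3\}$ of $W^+$ and recalling that $\|\mathring{Ric}^\star(H_i)\|^2 = z_i^2$, one obtains
\begin{equation*}
\|Z\|^2 = 2\|\mathring{Ric}^\star\|^2 = 2\sum_{i=1}^{3} z_i^2.
\end{equation*}
It therefore suffices to establish the componentwise bound $z_i^{2} \leq A_i^{2}$ for each $i$.

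The heart of the proof is an explicit computation of $K(P_i^\pm)=\langle \mathcal{R}(P_i^\pm),P_i^\pm\rangle$ using the block decomposition $\mathcal{R}=U+W^{+}+W^{-}+Z$. Since $\Lambda^{+}\perp\Lambda^{-}$, the diagonal blocks contribute $u+\tfrac{1}{2}w_i^{+}+\tfrac{1}{2}\lambda_i^{-}=v_i+\tfrac{1}{2}\lambda_i^{-}$. For the off-diagonal block, the identity $\mathring{Ric}^\star(H_i)=z_i K_i$ together with the fact that $\mathring{Ric}$ is the adjoint of $\mathring{Ric}^\star$ yields $\langle Z(P_i^\pm),P_i^\pm\rangle = \pm z_i$, so that
\begin{equation*}
K(P_i^\pm) = v_i + \tfrac{1}{2}\lambda_i^{-} \pm z_i.
\end{equation*}
The pinching inequalities $\delta \leq K(P_i^\pm) \leq 1$ now give four linear estimates on $z_i$; combining the upper estimate coming from $K(P_i^{+})\leq 1$ with the lower estimate coming from $K(P_i^{-})\geq \delta$ (and likewise their symmetric counterparts) produces $|z_i|\leq A_i$, whence $z_i^{2}\leq A_i^{2}$, and summation yields the conclusion.

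The main obstacle I anticipate is the sign bookkeeping in the last step: the two pinching inequalities $K(P_i^{+})\leq 1$ and $K(P_i^{-})\geq \delta$ produce the two terms of the minimum defining $A_i$, and one must verify that these are precisely the sharpest of the four available bounds on $z_i$ (so that the symmetric pair of inequalities controls $-z_i$). A minor technicality is the degenerate case $\mathring{Ric}^\star(H_i)=0$, in which $z_i=0$ and any choice of unit $K_i\in\Lambda^{-}$ makes the desired inequality hold trivially.
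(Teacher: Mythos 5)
Your proposal is correct and follows essentially the same route as the paper: both expand $\|Z\|^2=2\sum_i z_i^2$ in the eigenbasis of $W^+$, apply the pinching bounds of Lemma \ref{LemmaVi}(1) to the elements $\frac{H_i\pm K_i}{\sqrt{2}}\in\mathcal{G}$, compute $\langle\mathcal{R}(\tfrac{H_i\pm K_i}{\sqrt 2}),\tfrac{H_i\pm K_i}{\sqrt 2}\rangle=v_i+\tfrac{1}{2}\lambda_i^-\pm z_i$, and extract $|z_i|\le A_i$ from the resulting four linear inequalities. The sign bookkeeping you flag works out exactly as you anticipate and matches the paper's derivation.
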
 

\begin{proof}
Since the proof of this lemma is very short, we include it here for sake of completeness. First of all,  we easily compute
\begin{eqnarray}
||Z||^{2}&=&||\mathring{Ric}^{\star}||^{2}+||\mathring{Ric}||^{2}=2||\mathring{Ric}^{\star}||^{2}\nonumber\\&=&2\sum_{i=1}^{3}||\mathring{Ric}^{\star}(H_{i})||^{2}\nonumber\\&=&2\sum_{i=1}^{3}\langle \mathring{Ric}^{\star}(H_{i}),\,K_{i}\rangle^{2}.
\end{eqnarray} Now, we need to estimate the value of $\langle \mathring{Ric}^{\star}(H_{i}),\,K_{i}\rangle.$ To do so, notice that by Lemma \ref{LemmaVi} we have
\begin{eqnarray}
\label{pl1}
\delta\leq \left\langle \mathcal{R}\Big(\frac{H_{i}\pm K_{i}}{\sqrt{2}}\Big), \, \frac{H_{i}\pm K_{i}}{\sqrt{2}}\right\rangle\leq 1.
\end{eqnarray} Moreover, one has

\begin{eqnarray}
\left\langle \mathcal{R}\Big(\frac{H_{i}\pm K_{i}}{\sqrt{2}}\Big), \, \frac{H_{i}\pm K_{i}}{\sqrt{2}}\right\rangle&=&u+\frac{1}{2}\langle W^{+}H_{i},\,H_{i}\rangle+\frac{1}{2}\langle W^{-}K_{i},\,K_{i}\rangle\nonumber\\&& +2\left\langle \mathring{Ric}\Big(\frac{\pm K_{i}}{\sqrt{2}}\Big),\,\frac{H_{i}}{\sqrt{2}}\right\rangle\nonumber\\&=&u+\frac{1}{2}w_{i}^{+}+\frac{1}{2}\lambda_{i}^{-}\pm \langle \mathring{Ric}(K_{i}),\,H_{i}\rangle \nonumber\\&=& v_{i}+\frac{1}{2}\lambda_{i}^{-}\pm \langle \mathring{Ric}^{\star}(H_{i}),\, K_{i}\rangle.
\end{eqnarray} This jointly with formula  (\ref{pl1}) gives

\begin{equation*}
\delta -v_{i}-\frac{1}{2}\lambda_{i}^{-}\leq \pm \langle \mathring{Ric}^{\star}(H_{i}),\,K_{i}\rangle\leq 1-v_{i}-\frac{1}{2}\lambda_{i}^{-},
\end{equation*}  from which it follows that

$$|\langle \mathring{Ric}^{\star}(H_{i}),\,K_{i}\rangle|\leq A_{i},$$ as wished.
\end{proof}

\vspace{1.30cm}

We are now in the position to present the proof of the main results.

\section{Proof of the main results}

\subsection{Proof of Theorem \ref{thhalf}}

\begin{proof}
To begin with, since $(M^4,g)$ is a compact half-conformally flat smooth manifold of positive scalar curvature we can use Proposition 2.4 of \cite{Noronha} to infer that $M^4$ is definite. Moreover, it is known by Synge's theorem that $M^4$ is simply connected. Thus, from Do\-naldson \cite{Donaldson} and Freedman \cite{Freedman}, $M^4$  is homeomorphic to a connected sum $\Bbb{CP}^{2}\sharp\cdots\sharp\Bbb{CP}^{2}$ of $b_{2}$ copies of the complex projective plane (if $b_{2}>0$) or the $4$-sphere (if $b_{2}=0$).
 
Now, it remains to prove that either $b_{2}=0$ or $b_{2}=1.$ To this end, it suffices to prove that 
\begin{equation}
|\tau(M)|<\frac{1}{2}\chi(M).
\end{equation} Indeed, without loss of generality we may assume that $\tau(M)>0.$ Therefore, by using the classical Gauss-Bonnet-Chern formula

$$\chi(M)=\frac{1}{8\pi^2}\int_M\left(\frac{s^2}{24}+|W^+|^2+|W^-|^2-\frac{1}{2}|\mathring{Ric}|^2\right)dV_g$$ jointly with the Hirzebrush's theorem $$\tau(M)=\frac{1}{12\pi^{2}}\int_M\left(|W^+|^2-|W^-|^2\right)dV_{g}$$ we arrive at
\begin{eqnarray*}
\chi(M)-2\tau(M)&=&\frac{1}{8\pi^2}\int_M\left(\frac{s^2}{24}-\frac{1}{3}|W^+|^2+\frac{7}{3}|W^-|^2-\frac{1}{2}|\mathring{Ric}|^2\right)dV_g.
\end{eqnarray*} 

Before proceeding, for simplicity, let us consider  $$\mathcal{F}(g)=\left(\frac{s^2}{24}-\frac{1}{3}|W^+|^2+\frac{7}{3}|W^-|^2-\frac{1}{2}|\mathring{Ric}|^2\right)$$ and hence, we easily see that

\begin{equation}
\label{kl}\chi(M)-2\tau(M)=\frac{1}{8\pi^2}\int_{M}\mathcal{F}(g)dV_g.
\end{equation} So, we need to show the positivity of the right hand side of (\ref{kl}). But, in view of the pinching condition $\delta\leq K\leq1$ we may use  Eq. (1) in \cite{Ville} (see also Lemmas 1.3 and 1.4  in \cite{Ville}) to obtain

\begin{eqnarray}
\label{deg}
\mathcal{F}(g)&\geq&\frac{10}{9}\left(\sum_{i=1}^3v_i\right)^2-\frac{4}{3}\sum_{i=1}^3v_i^2+\frac{7}{2}\alpha^{2}\nonumber\\&&-2\sum_{i=1}^3 \min\{\big(1-v_{i}-\frac{\lambda_{i}^{-}}{2}\big)^ {2},\big(v_{i}+\frac{\lambda_{i}^{-}}{2}-\delta\big)^{2}\},
\end{eqnarray} where $\alpha=\max |\lambda_{i}^{-}|.$ For more details see \cite{Ville}.

From now on, since $(M^4,g)$ is half-conformally flat, we assume that $W^{-}=0.$ Whence, it follows that $\lambda_{i}^{-}=0$ for $i=1,\,2$ or $3$ and consequently, $\alpha=0.$ Thus, by (\ref{deg}) one obtains

\begin{equation}
\frac{\mathcal{F}(g)}{2}\geq\frac{5}{9}\left(\sum_{i=1}^3v_i\right)^2-\frac{2}{3}\sum_{i=1}^3v_i^2-\sum_{i=1}^3m(v_i)^2,\end{equation} where $m(x)=\min\{1-x,x-\delta\}.$

In order to proceed, we introduce the function $f:\Bbb{R}^3\to\Bbb{R}$ given by $$f(x_1,x_2,x_3)=\frac{5}{9}\left(\sum_{i=1}^3x_i\right)^2-\frac{2}{3}\sum_{i=1}^3x_i^2-\sum_{i=1}^3m(x_i)^2$$ and moreover, let us consider the set $$E=\{(x_1,x_2,x_3)\in\Bbb{R}^3;\delta\leq x_1\leq x_2\leq x_3\leq1\}.$$ Then, easily one verifies that

$$Hess\,f=\frac{10}{9}\left(\begin{array}{ccc}
-2 & 1 & 1 \\
1 & -2 & 1 \\
1& 1 & -2
\end{array}\right),$$ whose the eigenvalues are $\{-3,-3,0\}.$ Therefore, $f$ is concave in $E$ and then it suffices to show the positivity of $f$ in the following points: $(\delta,\delta,\delta),$ $(\delta,\delta,1),$ $(\delta,1,1)$ and $(1,1,1).$ In fact, it is not hard to check that 

\begin{itemize}
\item $f(\delta,\delta,\delta)=3\delta^2;$
\item $f(\delta,\delta,1)=\frac{8\delta^2+20\delta-1}{9};$
\item $f(\delta,1,1)=\frac{-\delta^2+20\delta+8}{9};$
\item $f(1,1,1)=3.$
\end{itemize} Whence, taking into account that $\delta\geq\frac{3\sqrt{3}-5}{4},$ we conclude that $f$ is nonnegative, and we therefore have  $$\frac{\mathcal{F}(g)}{2}\geq f(v_1,v_2,v_3)\geq0.$$ Next, since there exist points where $f$ is positive on $E$ we deduce $$\int_M \mathcal{F}(g)dV_g>0$$ and hence, this data into (\ref{kl}) yields  $|\tau(M)|<\frac{1}{2}\chi(M).$ Finally, it is straightforward to check that $b_2=0$ or $b_2=1.$ Thereby, according to Freedman \cite{Freedman} (see also Donaldson \cite{Donaldson}) $M^4$ is homeomorphic to the complex projective space $\Bbb{CP}^2$ or the $4$-sphere $\Bbb{S}^4.$

So, the proof is completed.
\end{proof}

\subsection{Proof of Theorem \ref{thkperp}}

\begin{proof}
The first part of the proof follow the ideas outlined in \cite{Ribeiro}, which was partially inspired by \cite{gursky}. For the sake of completeness we include here all details. Indeed, we argue by contradiction, assuming that $M^4$ is indefinite. In this case, there exist nonzero harmonics 2-forms $\omega_+$ and $\omega_-$ such that

$$\int_M\Big(\left(|\omega_+|^2+\varepsilon\right)^{\frac{1}{4}}-t\left(|\omega_-|^2+\varepsilon\right)^{\frac{1}{4}}\Big)dV_g=0,$$
for any $\varepsilon>0$ and $t=t(\varepsilon).$ Next, following the same steps of the first part of the proof of Theorem 1 in \cite{DR} (see Eq. (3.6) in \cite{DR}) we achieve at

\begin{eqnarray}
\label{th1eq5}
0&\geq&\lambda_1\int_M\left(|\omega_+|^{\frac{1}{2}}-t|\omega_-|^{\frac{1}{2}}\right)^2 dV_g\nonumber\\&&+\int_M\left(|\omega_+|^{-1}\langle
 \mathcal{N}(\omega_+),\omega_+\rangle+t^2|\omega_-|^{-1}\langle
  \mathcal{N}(\omega_-),\omega_-\rangle\right)dV_g.
\end{eqnarray}

Proceeding in analogy with \cite{DR} we choose $X_+=|\omega_+|^{-\frac{1}{2}}\omega_+$ and $X_-=t|\omega_-|^{-\frac{1}{2}}\omega_-.$ Hence, easily one verifies that
$X_\pm\in\Lambda^\pm,$ $|X_+|=|\omega_+|^{\frac{1}{2}}$ and $|X_-|=t|\omega_-|^{\frac{1}{2}}.$ Therefore, by considering $X=X_{+}+X_{-},$ it follows from (\ref{th1eq5}) that
\begin{equation*}
0\geq\int_M\left\{\lambda_1\left(|X_+|-|X_-|\right)^2+\langle \mathcal{N}(X),X\rangle\right\}dV_g
\end{equation*} and then by Lemma \ref{lem1} we deduce
\begin{equation*}
0\geq\int_M\left\{\lambda_1\left(|X_+|-|X_-|\right)^2+4K_{1}^{\perp}|X|^2-\frac{1}{3}(s-12K_1^{\perp})\left||X_+|^2-|X_-|^2\right|\right\}dV_g.
\end{equation*} Since $|X_+|=|\omega_+|^\frac{1}{2}$ and $|X_-|=t|\omega_-|^\frac{1}{2},$ the above expression can be written succinctly as
\begin{eqnarray}
\label{th1eq6}0&\geq&\int_M\Big(\lambda_1(|\omega_+|-2t|\omega_+|^\frac{1}{2}|\omega_-|^\frac{1}{2}+t^{2}|\omega_-|)+4K_{1}^{\perp}|\omega_+|\nonumber\\
 &&+4K_{1}^{\perp}t^{2}|\omega_-|-\frac{1}{3}(s-12K_{1}^{\perp})\vert|\omega_+|-t^{2}|\omega_-|\vert\Big)dV_{g}.
\end{eqnarray} Notice that the integrand of (\ref{th1eq6}) is a quadratic function of $t.$ Thus, for simplicity, we may set
\begin{eqnarray*}
\mathcal{P}(t)&=&\lambda_1\big(|\omega_+|-2t|\omega_+|^\frac{1}{2}|\omega_-|^\frac{1}{2}+t^{2}|\omega_-|\big)+4K_{1}^{\perp}|\omega_+|\nonumber\\&&+4K_{1}^{\perp}t^{2}|\omega_-|-\frac{1}{3}(s-12K_{1}^{\perp})\vert|\omega_+|-t^{2}|\omega_-|\vert.
\end{eqnarray*}

Before proceeding, given a point $p\in M^4,$ let us consider the sets $$A=\{t;\,\,|\omega_+|\geq t^2|\omega_-| \mbox{ at }p\}$$ and $$B=\{t;\,\,|\omega_+|<t^2|\omega_-| \mbox{ at }p\}.$$ Therefore, the definition of $\mathcal{P}(t)$ implies that 
\begin{eqnarray}\label{th1eq7}
\mathcal{P}(t)&=&[\lambda_1+4K_{1}^{\perp}-\frac{1}{3}(s-12K_{1}^{\perp})]|\omega_+|-2\lambda_{1}|\omega_+|^\frac{1}{2}|\omega_-|^\frac{1}{2}t\nonumber\\&&+[\lambda_1+4K_{1}^{\perp}+\frac{1}{3}(s-12K_{1}^{\perp})]|\omega_-|t^2,
\end{eqnarray} in $A$ and
\begin{eqnarray}\label{th1eq8}
\mathcal{P}(t)&=&[\lambda_1+4K_{1}^{\perp}+\frac{1}{3}(s-12K_{1}^{\perp})]|\omega_+|-2|\omega_+|^\frac{1}{2}|\omega_-|^\frac{1}{2}t\nonumber\\&&+[\lambda_1+4K_{1}^{\perp}-\frac{1}{3}(s-12K_{1}^{\perp})]|\omega_-|t^2,
\end{eqnarray} in $B.$ In both cases, the discriminant $\Delta$ of $\mathcal{P}(t)$ is given by
\begin{eqnarray*}
\Delta&=&4\lambda_{1}^{2}|\omega_+||\omega_-|-4[\lambda_{1}+4K_{1}^{\perp}-\frac{1}{3}(s-12K_{1}^{\perp})][\lambda_{1}+4K_{1}^{\perp} +\frac{1}{3}(s-12K_{1}^{\perp})]|\omega_+||\omega_-|\\
 &=&4|\omega_+||\omega_-|\{\lambda_{1}^{2}-[(\lambda_{1}+4K_{1}^{\perp})^{2}-\frac{1}{9}(s-12K_{1}^{\perp})^{2}]\}\\
 &=&4|\omega_+||\omega_-|[\lambda_{1}^{2}-(\lambda_{1}^{2}+8\lambda_{1}K_{1}^{\perp}+16(K_{1}^{\perp})^{2})+\frac{1}{9}(s^{2}-24K_{1}^{\perp}s +144(K_{1}^{\perp})^{2})]\\
 &=&\frac{4}{9}|\omega_+||\omega_-|(-72\lambda_{1}K_{1}^{\perp}+s^{2}-24K_{1}^{\perp}s).
 \end{eqnarray*} Hence, the condition $K^\perp\geq\frac{s^2}{24(3\lambda_1+s)}$ guarantees that $\Delta\leq 0.$

 On the other hand, it follows from (\ref{th1eq7}) that $$[\lambda_1+4K_{1}^{\perp}+\frac{1}{3}(s-12K_{1}^{\perp})]|\omega_-|\geq 0,$$ where we used that $s-12K_1^\perp=-6(w_1^++w_1^-)\geq0.$ Otherwise, if $\mathcal{P}(t)$ is given by (\ref{th1eq8}), it follows from our assumption that it holds
\begin{eqnarray*}
\Big(\lambda_1+4K_{1}^{\perp}-\frac{1}{3}(s-12K_{1}^{\perp})\Big)|\omega_-|&\geq&\Big(\lambda_1+\frac{s^2}{6(3\lambda_1+s)}-\frac{1}{3}\big(s-\frac{s^2}{2(3\lambda_1+s)}\big)\Big)|\omega_-|\\
 &=&\frac{6\lambda_1(3\lambda_1+s)+s^2-2(3\lambda_1+s)s+s^2}{6(3\lambda_1+s)}|\omega_-|\\
 &=&\frac{18\lambda_1^2+6\lambda_1s+2s^2-6\lambda_1s-2s^2}{6(3\lambda_1+s)}|\omega_-|\\
 &=&\frac{18\lambda_1^2}{6(3\lambda_1+s)}|\omega_-|\\
 &\geq&0.
\end{eqnarray*} Thereby, in both cases we have $\mathcal{P}(t)\geq 0.$ Next, since $p$ is an arbitrary point we conclude from (\ref{th1eq6}) that $\mathcal{P}(t)\equiv0.$ Now, it suffices to use (\ref{th1eq7}) and (\ref{th1eq8}) to deduce $$[\lambda_1+4K_{1}^{\perp}\pm\frac{1}{3}(s-12K_{1}^{\perp})]|\omega_-|=0.$$ But, taking into account that $\lambda_1+4K_{1}^{\perp}\pm\frac{1}{3}(s-12K_{1}^{\perp})>0$ we conclude $|\omega_-|=0.$ This yields the desired contradiction, and therefore forces the intersection form of $M^4$ to be definite. To conclude, we invoke again Freedman \cite{Freedman} and Donaldson \cite{Donaldson} to deduce that $M^4$ is homeomorphic to the complex projective space $\Bbb{CP}^2$ or the $4$-sphere $\Bbb{S}^4.$ 

This finishes the proof of the theorem.
\end{proof}

\begin{acknowledgement}
The authors want to thank the referee for his careful reading, relevant remarks and valuable suggestions. Moreover, the authors want to thank Xiaodong Cao, Hung Tran, Ezio Costa and Qing Cui for their valuable comments and helpful conversations about this subject. In particular, we would like to thank Xiadong Cao and Qing Cui for pointing out the preprints \cite{CaoTran} and \cite{Cui}. 
\end{acknowledgement}


\begin{thebibliography}{BB}

\bibitem{BergerASNSP} Berger, M.: Les vari\'{e}t\'{e}s Riemenniennes $1/4$-pinc\'{e}es. Ann. Scuola Norm. Sup. Pisa 14 (1960) 161-170.

\bibitem{Berger} Berger, M.: Sur quelques vari\'{e}t\'{e}s Riemenniennes suffisament pinc\'{e}es. Bull. Soc. Math. France. 88 (1960) 57-71.

\bibitem{besse} Besse, A.: Einstein manifolds, Springer-Verlag, New York (2008).

\bibitem{renato} Bettiol, R.: Positive biorthogonal curvature on $\Bbb{S}^2 \times \Bbb{S}^2.$ Proc. Amer. Math. Soc. 142 (2014) 4341-4353.

\bibitem{renato2} Bettiol, R.: Four-dimensional manifolds with positive biorthogonal curvature. Asian J. Math. 21 (2017) 391-396.

\bibitem{Bourguignon} Bourguignon, J.: La conjecture de Hopf sur $\Bbb{S}^2\times\Bbb{S}^2.$ Geometrie Riemannienne en dimension 4, Seminaire Arthur Besse, CEDIC Paris, 1981, pp. 347-355.

\bibitem{BS} Brendle, S. and Schoen, R.: Curvature, sphere theorems, and the Ricci flow. Bull. Amer. Math. Soc. 48 (2010) 1-32.

\bibitem{BS-JAMS} Brendle, S. and Schoen, R.: Manifolds with $1/4$-pinched curvature are space forms. J. Amer. Math. Soc. 22 (2009) 287-307.

\bibitem{XCao} Cao, X. and Tran, H.: Einstein four manifolds with pinched sectional curvature. Advances in Math. 335 (2018) 322-342.


\bibitem{CaoTran} Cao, X. and Tran, H.: Four-manifolds of pinched sectional curvature. Arxiv: 1809.05158v2 [math.DG]. (2019).

\bibitem{Chern} Chern, S. S.: On curvature and Characteristic class of a Riemann Manifold. A. Math. Seminar der Univ. Hamburg. 20 (1955) 137-189.


\bibitem{CR} Costa, E. and Ribeiro Jr., E.:  Four-dimensional compact manifolds with nonnegative biorthogonal curvature. Michigan Math. J. 63 (2014) 673-688.

\bibitem{Cui} Cui, Q. and Sun, L.: On the topology and rigidity of four-dimensional Einstein manifolds. Preprint, 2018.

\bibitem{DR} Di\'ogenes, R. and Ribeiro Jr., E.: Four-dimensional manifolds with pinched positive sectional curvature. Geom. Dedicata. 200 (2019) 321-330.

\bibitem{Donaldson} Donaldson, K.: An application of gauge theory to four dimensional topology. J. Differential Geom. 18 (1983) 279 - 315.

\bibitem{Freedman} Freedman, M.: The topology of four-dimensional manifolds. J. Differential Geom. 17 (1982), 327-454.

\bibitem{Gray} Gray, A.: Invariants of curvature operators of four-dimensional Riemannian manifolds. Proc. of 13th Bien. Sem. Canadian Math. Cong. 2 (1972) 42-65.

\bibitem{gursky} Gursky, M.: Four-manifolds with $\delta W^{+}=0$ and Einstein constants of the sphere. Math. Ann.  318 (2000) 417-431.

\bibitem{Hulin} Hulin, D.: Le second nombre de Betti d'une vari\'et\'e riemannienne $(\frac{1}{4}-\varepsilon)$-pinc\'ee de dimension $4.$ Annales de l'Institut Fourier. 33 (1983) no. 2, 167-182.

\bibitem{Kl} Klingenberg, W.: \"Uber Riemannsche Mannigfaltigkeiten mit positiver Kr\"ummung. Comment. Math. Helv. 35 (1961) 47-54.

\bibitem{Ko2005} Ko, K-S.: A pinching theorem for Riemannian 4-manifold. Korean Journal Math. 13 (2005) 35-41.

\bibitem{Ko} Ko, K-S.: On 4-dimensional Einstein manifolds which are positively pinched. Korean Journal Math. 3 (1995) 81-88.

\bibitem{tao} Petersen, P. and Tao, T.: Classification of almost quarter-pinched manifolds. Proc. Amer. Math. Soc. 137 (2009) 2437-2440.

\bibitem{Noronha} Noronha, H.: Self-Duality and 4-Manifolds with Nonnegative curvature on Totally Isotropic 2-Planes. Michigan Math. J. 41 (1994) 3-12.

\bibitem{NoronhaMC} Noronha, H.: Some results on nonnegatively curved four manifods. Matemat. Contemp. 9 (1995) 153-175.

\bibitem{LeBrun} LeBrun, C.: Explicit self-fual metrics on $\Bbb{CP}^{2}\sharp\cdots\sharp\Bbb{CP}^{2}.$ J. Diff. Geom. 34 (1991) 223-253.

\bibitem{Rauch} Rauch, H.: A contribution to differential geometry in the large. Ann. of Math. 54 (1951) 38-55.

\bibitem{Ribeiro} Ribeiro Jr, E.: Rigidity of four-dimensional compact manifolds with harmonic Weyl tensor. Annali di Matematica Pura Appl. 195 (2016) 2171-2181.

\bibitem{SeamanPAMS} Seaman, W.: Harmonic two-forms in four dimensions. Proc. Amer. Math. Soc. 112 (1991) 545-548.

\bibitem{SeamanAGAG} Seaman, W.: On four-manifolds which are positively pinched. Ann. Global Anal. Geom. 5 (1987) 193-198.

\bibitem{SeamanTAMS} Seaman, W.: On manifolds with nonnegative curvature on totally isotropic 2-planes. Trans. Amer. Math. Soc. 338 (1993) 843-855.

\bibitem{Seam2} Seaman, W.: Orthogonally pinched curvature tensors and aplications. Math. Scand. 69 (1991) 5-14.

\bibitem{ST} Singer, I. and Thorpe, J.: The curvature of 4-dimensional spaces. Global Analysis, Papers in Honor of K. Kodaira. Princeton (1969) 355-365.

\bibitem{Ville} Ville, M.: Les vari\'{e}t\'{e}s Riemannienes de dimension 4 $\frac{4}{19}$-pinc\'{e}es. Ann. Inst. Fourier 39 (1989) 149-154.

\bibitem{Ville2} Ville, M.: On $1/4$-pinched $4$-dimensional Riemannian manifolds of negative curvature. Ann. Glob. Anal. Geom. 3 (1985) 329-336.

\bibitem{Wu} Wu, P.: A note on Einstein four-manifolds with positive curvature. J. Geom. Phys. 114 (2017) 19-22.



\end{thebibliography}
\end{document}